\newtheorem{theorem}{Theorem}[section]
\newtheorem{lemma}[theorem]{Lemma}
\theoremstyle{definition}
\newtheorem{definition}[theorem]{Definition}
\newtheorem{example}[theorem]{Example}
\theoremstyle{remark}
\newtheorem{remark}[theorem]{Remark}
\newcommand{\mysection}[1]{\section{#1}
\setcounter{equation}{0}}
\newcommand{\bR}{\mathbb R}
\newcommand\md{\mathrm{d}}
\newcommand\cP{\mathscr{P}}
\newcommand{\Div}{\operatorname{div}}
\def\dashint{\operatorname%
{\,\,\text{\bf--}\kern-.98em\DOTSI\intop\ilimits@\!\!}}
\renewcommand{\epsilon}{\varepsilon}
\begin{document}
\title [The aggregation equation]{On similarity solutions to the multidimensional aggregation equation}

\author[H. Dong]{Hongjie Dong}
\address[H. Dong]{Division of Applied Mathematics, Brown University, 182 George Street, Box F, Providence, RI 02912, USA}
\email{Hongjie\_Dong@brown.edu}
\thanks{Hongjie Dong was partially supported by the National Science Foundation under agreement No. DMS-0800129.}

%\date{\today}

\subjclass[2010]{35B40, 35K55, 92B05}

\keywords{the aggregation equation, similarity solutions}

\begin{abstract}
We study similarity solutions to the multidimensional aggregation
equation $u_t+\Div(uv)=0$, $v=-\nabla K*u$ with general power-law
kernels $K$ such that $\nabla K(x)=x|x|^{\alpha-2},\alpha\in
(2-d,2)$. We analyze the equation in different regimes of the
parameter $\alpha$. In the case when $\alpha\in [4-d,2)$, we give
a characterization of all the ``first kind'' radially symmetric
similarity solutions. We prove that any such solution is a linear
combination of a delta ring and a delta mass at the origin. On the
other hand, when $\alpha\in (2-d,4-d)$, we show that there exist
multi delta-ring similarity solutions in $\bR^d$. In particular,
our results imply that multi delta-ring similarity solutions exist
in 3D if $\alpha$ is just a little bit below $1$.
\end{abstract}

\maketitle

\mysection{Introduction}               \label{sec1}

In this paper, we study the multidimensional aggregation equation
\begin{equation}
                            \label{Agg}
u_t+\Div(uv)=0,\quad v=-\nabla K*u
\end{equation}
for $x\in\bR^d$ and $t> 0$ with the initial data
\begin{equation*}
              %                  \label{NSE2}
u(0,x)=u_0(x),\quad x\in\bR^d.
\end{equation*}
Here $d$ is the space dimension, $u\ge 0$ is the density function, $v$ is the velocity field, $K$ is the interaction potential kernel which typically is taken to be radially symmetric, and $*$ denotes the spatial convolution. The aggregation equation arises in various models for biological aggregation and problems in granular media; see, for instance, Mogilner and Edelstein-Keshet \cite{ME} and Carrillo, McCann, and Villani \cite{CMV}. The equation appears as an alternative
to the Debye--Huckel or Keller--Segel model.
It can be viewed as a continuum model for $N$ particles $X_1,\ldots,X_N$, which interact via a pairwise interaction potential:
$$
\frac{dX_i}{dt}=-\sum_{k=1}^N m_k \nabla K(X_i-X_K).
$$
Examples of such kernels which are commonly used are $K(x)=1-e^{-|x|}$ and $K(x)=|x|^\alpha$.
For nonsmooth kernels $K$, a typical property of the solutions is that they concentrate mass in finite time. On the other hand, for smooth kernels, solutions blow up at the infinity (cf. \cite{BCL}).

In the past few years, the aggregation equation has attracted many attentions due to its diverse applications in biology and physics. The problems of the well-posedness in various spaces, the existence of finite-time blowups, asymptotic behaviors of solutions of this equation have been studied extensively in a series of papers; see, for example, Li and Toscani \cite{LT}, Bodnar and Vel\'azquez \cite{BV}, Bertozzi and Laurent \cite{BL}, Laurent \cite{La}, Bertozzi, Laurent, and Rosado \cite{BLR}, Carrillo et al. \cite{CFFLS}, the author \cite{Don10}, Bertozzi, Garnett, and Laurent \cite{BGL} and references therein. The equation with an additional dissipation term is closely relevant to the Keller--Segel model and was investigated by Li and Rodrigo in \cite{LR1,LR2}. See also \cite{BL2} for a nice review about this equation.

We are interested in profiles of first-kind self-similar solutions to the equation \eqref{Agg} with general power-law interaction kernel $K(x)=|x|^\alpha/\alpha, \alpha\in (2-d,2)$ (when $\alpha=0$, naturally we take $K(x)=\log|x|$). These solutions are mass conserving and of the form
\begin{equation}
                                    \label{eq00.25}
u(t,x)=\frac 1 {R(t)^d} u_0\left(\frac x {R(t)}\right).
\end{equation}
For many evolutionary equations, this type of solutions are of special interest as they usually give large time behaviors of global solutions or asymptotic behaviors of finite-time blowups evolving from general initial data.
One class of such solutions is the trivial single delta-ring solutions, where $u_0=m \delta_{|x|=\rho}$ for $m,\rho>0$, i.e., the support of the solution concentrated on a sphere with shrinking radius. Recently in \cite{BCL}, Bertozzi, Carrillo and Laurent studied nontrivial similarity solutions to \eqref{Agg} with $\alpha=1$. In 1D, they constructed a weak solution with support on an open interval:
$$
u(t,x)=\frac 1 {T^*-t} U\left(\frac x {T^*-t}\right),
$$
where $U$ is the uniform distribution on $(-1,1)$. While in 2D, they found a double delta-ring solution given by
$$
u_0=m_1 \delta_{|x|=\rho_1}+m_2\delta_{|x|=\rho_2}
$$
for appropriate $m_i,\rho_i>0$. In the same paper, using a relation between the kernel $K$ and the Laplace operator, they also proved that in any {\em odd} dimension $d\ge 3$ such solutions cannot exist with support on open sets. Later in \cite{Don10}, by observing certain concavity property of the kernel in the polar coordinates, we completely characterized all the radially symmetric first-kind self-similar solutions in any dimension $d\ge 3$ when $\alpha=1$: all such solutions must have the form \eqref{eq00.25}, where $R(t)$ is a linear positive decreasing function and
\begin{equation*}
                                    %\label{eq10.40}
u_0=m_0\delta_{|x|=0}+m_1\delta_{|x|=\rho_1}
\end{equation*}
for any $m_0,m_1\ge 0,\rho_1>0$. In particular, this result
implies that with $\alpha=1$ the equation \eqref{Agg} does not
have any double or multi delta-ring solutions when $d\ge 3$. We
also mention that in a recent interesting paper \cite{HB}, Huang
and Bertozzi provided very precise numerical evidence of the local
existence of smooth radially symmetric similarity solutions to
\eqref{Agg} when $\alpha=1$ in high space dimensions, which are of
the second kind and do not conserve mass. Similar numerical
simulations for $\alpha\in (0,2)$ have been carried out by the
same authors in a recent preprint \cite{Hua10}, in which they also
studied the long time behavior of global smooth similarity
solutions when $\alpha>2$.

The objective of this paper is to investigate similarity solutions
to \eqref{Agg} with general power-law kernel by using analytic
methods. To the best of our knowledge, except for some numerical
results mentioned above, this problem has not been studied in
detail. We analyze the equation in different regimes of the
parameter $\alpha$. Since $\nabla K=x|x|^{\alpha-2}$, the velocity
field is always contracting. In the case when $\alpha\in [4-d,2)$,
we give a characterization of all the first-kind radially
symmetric similarity solutions. We prove that any such solution is
a linear combination of a delta ring and a delta mass at the
origin; see Theorems \ref{thm2.1} and \ref{thm4.1}. This result
generalizes the aforementioned results in \cite{BCL} and
\cite{Don10} proved for the case $\alpha=1$. On the other hand,
when $\alpha\in (2-d,4-d)$, we show that there exist multi
delta-ring similarity solutions in $\bR^d$; see Theorem
\ref{thm3.1}. This, in particular, implies the existence of multi
delta-ring similarity solutions in 3D if $\alpha$ is just a little
bit below $1$. Finally, we present an example of first-kind
similarity solutions with support on open sets in the limiting
case $\alpha=2-d$, which generalizes the 1D example in \cite{BCL}
with $\alpha=1$; see Example \ref{ex5.1}.

%We finish the Introduction by giving an outline of the proofs.
All the proofs in this paper are based on delicate analysis of the
kernel $\phi$ in the polar coordinates; see \eqref{eq16.38} for
the definition of $\phi$. For the proofs of Theorems \ref{thm2.1}
and \ref{thm4.1}, first we note that the arguments in \cite{BCL}
and \cite{Don10} break down in several places for general
$\alpha$. In particular, there is no clear relation between $K$
and the Laplace operator, and we do not have the concavity of the
kernel $\phi$ in the polar coordinates; cf. Remark \ref{rem5}.
Here we refine an idea in \cite{Don10} by observing that instead
of the concavity of $\phi$, the proof of the corresponding result
in \cite{Don10} only uses the monotonicity property of $\phi/r$.
This enables us to extend the argument to the general $\alpha\in
[4-d,2)$. However, we are not able to find a unified proof of this
monotonicity property. We treat the case $\alpha=4-d$ or
$\alpha>5-d$ in Section \ref{sec2} and the case $\alpha\in
(4-d,5-d)$ in Section \ref{sec4} separately by using completely
different methods. The proof of the existence of multi delta-ring
solutions (Theorem \ref{thm3.1}) relies on the fact that
$\phi'(0)<\phi(1)$ when $\alpha\in (2-d,4-d)$. Once this is
verified, we reduce the problem to the existence of positive
solutions to a system of linear algebraic equations.

%An outline of the remaining part of the paper: We prove the characterization result Theorems \ref{thm2.1} and \ref{thm4.1}

Regarding the double and multi delta-ring similarity solutions constructed in this paper and \cite{BCL}, an interesting question would be whether these solutions describe the asymptotic behavior of certain finite-time blowup solutions. Another relevant problem is the stability of these  solutions. We shall study these problems in future work.

\mysection{Characterization of similarity solutions}        \label{sec2}

Let $u$ be a radially symmetric first-kind similarity solution
given by \eqref{eq00.25}. Recall that we take $\nabla
K=x|x|^{\alpha-2}$. By the homogeneity, it is easily seen that
$$
v(t,x)=R(t)^{\alpha-1}v_0\left(\frac x {R(t)}\right),\quad v_0=-x|x|^{\alpha-2}*u_0.
$$
Upon using a similar argument as in  \cite{BCL} by looking at the flow map driven by $v$, we know that on the support of $u_0$,
\begin{equation}
                                    \label{eq16.08}
v_0=-\lambda x
\end{equation}
with some constant $\lambda>0$.
Indeed, the function $u$ given by \eqref{eq00.25} is a first-kind similarity solution of \eqref{Agg} {\em if and only if} for any $x$ on the support of $u_0$, the flow map starting from $x$ is equal to  $R(t)x$ at time $t$, which clearly implies \eqref{eq16.08}.
Moreover, since $R(t)$ satisfies the ordinary differential equation
$$
R'(t)=v(t,R(t))=-\lambda R(t)^{\alpha-1}
$$
with the initial condition $R(0)=1$, we have
$$
R(t)=(1-t/T_0)^{1/(2-\alpha)},
$$
where $T_0=(2-\alpha)^{-1}\lambda^{-1}$ is the first time when all the mass concentrates at the origin.

We denote $\cP(\bR^d)$ to be the set of all probability measures on $\bR^d$.
\begin{definition}
Let $\mu\in \cP(\bR^d)$ be a radially symmetric probability measure. We define $\hat \mu\in \cP([0,+\infty))$ by
$$
\hat \mu(I)=\mu(\{x\in \bR^d: |x|\in I\})
$$
for all Borel sets $I$ in $[0,+\infty)$.
\end{definition}

In this section, we give a characterization of all the radially symmetric first-kind similarity measure solutions to \eqref{Agg} in the case that $d+\alpha\ge 5$ or $d+\alpha=4$. We state the result as the following theorem, which generalizes Theorem 3.1 of \cite{Don10} proved for the case $\alpha=1$.

\begin{theorem}[Characterization of similarity solutions]
                                    \label{thm2.1}
Let $d\ge 3$ and $\nabla K(x)=x|x|^{\alpha-2}$. Assume that either $\alpha\in [5-d,2)$ (and thus $d\ge 4$) or $\alpha=4-d$. Then any radially symmetric first-kind similarity measure solution is of the form
$$
\mu_t(x)=\frac 1 {R(t)^d} \mu_0\left(\frac x {R(t)}\right),
$$
where
\begin{equation*}
                                    %\label{eq10.40}
\hat \mu_0=m_0\delta_0+m_1\delta_{\rho_1}
\end{equation*}
for some constants $m_0,m_1\ge 0$ and $\rho_1>0$.
\end{theorem}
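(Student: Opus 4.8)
The plan is to reduce the statement to a single one–variable monotonicity property of the kernel and then to close the argument with a short convexity–type computation. First, by the discussion preceding the theorem, a radially symmetric first–kind similarity measure solution is exactly a radially symmetric $\mu_0\in\cP(\bR^d)$ for which there is a constant $\lambda>0$ with $\nabla K*\mu_0=\lambda x$ on $\operatorname{supp}\mu_0$; the associated scaling is $R(t)=(1-t/T_0)^{1/(2-\alpha)}$, as derived there. Write $\nu:=\hat\mu_0\in\cP([0,+\infty))$ and let $\phi$ be the polar form of $K$ from \eqref{eq16.38}, i.e.\ the spherical average of $K(x-y)$ over $|x|=r$, $|y|=s$, so that $(K*\mu_0)(x)=\Phi(r):=\int_0^\infty\phi(r,s)\,d\nu(s)$ when $|x|=r$. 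Since $\nabla K*\mu_0$ is a radial vector field with radial component $\Phi'$, the condition becomes $\Phi'(r)=\lambda r$ for every $r\in\operatorname{supp}\nu$ with $r>0$ (it is automatic at $r=0$ by symmetry); dividing by $r$ and differentiating under the integral sign (legitimate by the regularity of $\phi$),
\begin{equation*}
\int_0^\infty \Psi(r,s)\,d\nu(s)=\lambda\quad\text{for all }r\in\operatorname{supp}\nu,\ r>0,\qquad \Psi(r,s):=\frac{\partial_r\phi(r,s)}{r}.
\end{equation*}
By the $\alpha$–homogeneity of $K$ one has $\phi(r,s)=s^\alpha g(r/s)$ with $g(t):=\phi(t,1)$, hence $\Psi(r,s)=s^{\alpha-2}\psi(r/s)$ with $\psi(t):=g'(t)/t$, and $\Psi(r,0)$ is a positive multiple of $r^{\alpha-2}$. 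So everything is controlled by the single function $\psi$ on $(0,\infty)$.

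The key input — and the step I expect to be the main obstacle — is the monotonicity: for $\alpha\in[5-d,2)$ or $\alpha=4-d$, the function $t\mapsto\psi(t)$ is strictly decreasing on $(0,\infty)$, equivalently $r\mapsto\Psi(r,s)$ is strictly decreasing for each fixed $s\ge0$. This plays, for general $\alpha$, the role of the concavity of $\phi$ used in \cite{Don10} for $\alpha=1$ — indeed, as observed there, only such a monotonicity of $g'/r$ is really needed — and it is sharp: it fails for $\alpha\in(2-d,4-d)$, where Theorem \ref{thm3.1} produces multi delta–ring solutions, and it degenerates to $\psi\equiv\mathrm{const}$ at $\alpha=2$. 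I do not expect one argument to cover the whole range. For $\alpha=4-d$ (which already forces $d\ge3$) I would use that $K$ itself is, up to a constant, a fundamental solution of the bi-Laplacian $\Delta^2$ — because $\Delta K$ is a multiple of the Newtonian potential — to obtain an explicit, piecewise description of $g$ (with a constant branch inside the unit ring), from which the monotonicity of $\psi$ can be read off; this is the analytic counterpart of the Laplace–operator identities used in \cite{BCL}. For $\alpha\in[5-d,2)$ (which forces $d\ge4$) there is no such identity, and I would instead start from
\begin{equation*}
g(t)=\frac{c_d}{\alpha}\int_0^\pi (t^2+1-2t\cos\theta)^{\alpha/2}\,\sin^{d-2}\theta\,d\theta,
\end{equation*}
differentiate twice, and prove $t\,g''(t)-g'(t)<0$ on $(0,\infty)$; the hard part is controlling the sign of the resulting $\theta$–integrals near $\theta=0$ and near $t=1$, and this is exactly where $\alpha+d\ge5$ enters. (The remaining band $\alpha\in(4-d,5-d)$ requires yet another method and is handled by Theorem \ref{thm4.1}.)

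Granting the monotonicity, the conclusion is short. Suppose toward a contradiction that $\operatorname{supp}\nu$ contained two points $r_1,r_2$ with $0<r_1<r_2$. Applying the displayed identity at $r_1$ and at $r_2$ and subtracting,
\begin{equation*}
\int_0^\infty\big[\Psi(r_1,s)-\Psi(r_2,s)\big]\,d\nu(s)=0.
\end{equation*}
For $s>0$ one has $\Psi(r_1,s)-\Psi(r_2,s)=s^{\alpha-2}\big(\psi(r_1/s)-\psi(r_2/s)\big)>0$ by the monotonicity, and for $s=0$ one has $\Psi(r_1,0)-\Psi(r_2,0)>0$ because $\Psi(\cdot,0)$ is a positive multiple of $r^{\alpha-2}$ with $\alpha<2$; hence the integrand is strictly positive on $[0,+\infty)$, and integrating against the probability measure $\nu$ gives a strictly positive number — a contradiction. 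Therefore $\operatorname{supp}\nu\cap(0,\infty)$ is at most a single point $\rho_1$, so $\operatorname{supp}\nu\subseteq\{0,\rho_1\}$ and $\hat\mu_0=m_0\delta_0+m_1\delta_{\rho_1}$ for some $m_0,m_1\ge0$ with $m_0+m_1=1$ and $\rho_1>0$ (taking $m_1=0$ and $\rho_1$ arbitrary when $\nu=\delta_0$). Combined with the scaling $R(t)=(1-t/T_0)^{1/(2-\alpha)}$ already recorded, this is precisely the asserted form. A direct check gives the converse — every such $\hat\mu_0$ is a similarity solution, with $\lambda=\rho_1^{-1}\Phi'(\rho_1)>0$ — though this is not needed for the statement.
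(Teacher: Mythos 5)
Your overall architecture coincides with the paper's: reduce the theorem to the statement that $\phi(r)/r$ (your $\psi=g'/t$) is decreasing in $r$, then derive a contradiction from the existence of two distinct positive radii in $\operatorname{supp}\hat\mu_0$, using that $w(r)/r$ must be constant there. The genuine gap is that the monotonicity itself --- which you correctly single out as the main obstacle --- is left unproved, and it is precisely this lemma (Lemma \ref{lem2.2}) that carries essentially all the analytic content of the paper's argument. For $\alpha\in[5-d,2)$ you name the right quantity, $tg''(t)-g'(t)=r\phi'(r)-\phi(r)$, but stop at ``control the sign of the resulting $\theta$-integrals''; the paper's actual mechanism is two successive integrations by parts in $\theta$ that rewrite $r\phi'-\phi$ as $-r^3(2-\alpha)\frac{\omega_{d-1}}{\omega_d}\int_0^\pi (\sin\theta)^{d-2}A^{\alpha-4}\,Q\,d\theta$ with $Q=1-\frac{2(4-\alpha)\sin^2\theta}{(d-1)A^2}+\frac{(4-\alpha)(6-\alpha)\sin^4\theta}{(d-1)(d+1)A^4}$, and the hypothesis $d+\alpha\ge5$ enters exactly as the discriminant condition making $Q$ a nonnegative near-perfect square in $\sin^2\theta/A^2$. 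For $\alpha=4-d$ the paper reduces $r\phi'-\phi$ to $d\Psi_d-(d-1)\Psi_{d-2}$ with $\Psi_k(r)=\int_0^\pi\sin^k\theta\,A^{-k}\,d\theta$ and evaluates $\Psi_k$ exactly via the mean value property of the harmonic function $|x-re_1|^{-k}$ on the unit ball of $\bR^{k+2}$; your biharmonic route is a plausible alternative to the same explicit formula, but it is not carried out. Without either computation the proof is a reduction, not a proof.

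There is also a concrete error in how you state and then use the key lemma in the case $\alpha=4-d$: there $\phi(r)/r$ is \emph{constant} on $(0,1]$ and strictly decreasing only on $[1,\infty)$, so your claim that $\Psi(r_1,s)-\Psi(r_2,s)>0$ for \emph{every} $s>0$ fails whenever $s\ge r_2$ (both arguments $r_1/s,\,r_2/s$ then lie in $(0,1]$ and the difference vanishes). The contradiction still goes through, but only by the paper's more careful version of the last step: the inequality $\frac{1}{r_1}\phi(r_1/\rho)\ge\frac{1}{r_2}\phi(r_2/\rho)$ holds for all $\rho$ and is \emph{strict} for $\rho$ in a small neighborhood of $r_1$ (where $r_2/\rho>1$), and that neighborhood has positive $\hat\mu_0$-mass because $r_1\in\operatorname{supp}\hat\mu_0$. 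As written, your final step overclaims strict positivity of the integrand on all of $[0,\infty)$.
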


Before we give a proof of the theorem, first we recall that for any radially symmetric measure solution $\mu_t$ and $t>0$,
\begin{equation}
                                            \label{eq16.39}
v(t,x)=-\int_0^\infty \phi(|x|/\rho)\rho^{\alpha-1}\,\md \hat\mu_t(\rho)\frac {x} {|x|},                             \end{equation}
where
$\phi:[0,\infty)\to \bR$ is a function  defined by
\begin{equation}
                            \label{eq16.38}
\phi(r)=\dashint_{S_1}\frac {r-y_1} {|re_1-y|^{2-\alpha}}\,\md \sigma(y);
\end{equation}
See, for example, \cite{BCL} or \cite{Don10}.
From the dominated convergence theorem, it is easily seen that, when $\alpha\in (2-d,2)$, the function $\phi$ is in fact continuous on $[0,\infty)$ and satisfies
\begin{equation}
                                    \label{eq11.30}
\lim_{r\to \infty}\phi(r)/r=0.
\end{equation}

Our proof of Theorem \ref{thm2.1} is founded on the following key lemma.

\begin{lemma}
                                    \label{lem2.2}
i) Let  $d\ge 4$, $\alpha\in [5-d,2)$, and $\nabla K(x)=x|x|^{\alpha-2}$. Then the function $\phi(r)/r$ is a strictly decreasing function on $(0,\infty)$.

ii) Let $d\ge 3$, $\alpha=4-d$, and $\nabla K(x)=x|x|^{\alpha-2}$. Then the function $\phi(r)/r$ is a decreasing function on $(0,\infty)$. Moreover, it is a constant on $(0,1]$ and is strictly decreasing on $[1,\infty)$.
\end{lemma}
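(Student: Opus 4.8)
The plan is to reduce both parts to the positivity of an explicit combination of radial sphere--averages, and then to exploit potential theory for ii) and a case analysis for i). Throughout write $\beta=2-\alpha$, and for $y\in S_1$ put $P=P(r,y)=|re_1-y|^2=r^2-2ry_1+1$. Introduce the radial averages $F_\gamma(r)=\dashint_{S_1}|re_1-y|^{-\gamma}\,\md\sigma(y)$, so that $F_\beta(r)=\dashint_{S_1}|re_1-y|^{\alpha-2}\,\md\sigma$, $F_{\beta-2}(r)=\dashint_{S_1}|re_1-y|^{\alpha}\,\md\sigma$ and $F_{\beta+2}(r)=\dashint_{S_1}|re_1-y|^{\alpha-4}\,\md\sigma$; each is finite on $(0,\infty)$ (for $F_{\beta+2}$ when $\alpha=5-d$ there is an integrable logarithmic singularity at $r=1$, harmless below because of the factor $(r^2-1)^2$). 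First I would differentiate $\phi(r)/r=\dashint_{S_1}(r-y_1)\,r^{-1}P^{(\alpha-2)/2}\,\md\sigma$ under the integral sign and use the identity $y_1=(r^2+1-P)/(2r)$ to express the resulting $y_1$--moments of $P^{-(\beta+2)/2}$ through $F_{\beta+2},F_\beta,F_{\beta-2}$. A routine computation then gives
$$
\Big(\tfrac{\phi(r)}{r}\Big)'=-\frac{1}{4r^3}\,\mathcal B(r),\qquad
\mathcal B(r)=\beta(r^2-1)^2F_{\beta+2}(r)+2\big((\beta-1)r^2-(\beta+1)\big)F_\beta(r)+(\beta+2)F_{\beta-2}(r),
$$
so it suffices to show $\mathcal B\ge0$ on $(0,\infty)$, strictly for i) and with equality on $(0,1]$ for ii). I will also use two further identities, obtained in the same spirit (by differentiating $F_\gamma$, and by applying the radial Laplacian to $\bar F_\gamma(x)=\dashint_{S_1}|x-y|^{-\gamma}\,\md\sigma$):
$$
\gamma(r^2-1)F_{\gamma+2}=-2rF_\gamma'-\gamma F_\gamma,\qquad
F_\gamma''+\tfrac{d-1}{r}F_\gamma'=\gamma(\gamma-d+2)F_{\gamma+2}\quad(r\ne1).
$$

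For ii), $\alpha=4-d$ and $\beta=d-2$. Now $F_{d-2}$ is a constant multiple of the Newtonian potential of the uniform probability measure on $S_1$, so by Newton's theorem $F_{d-2}(r)=1$ on $(0,1]$ and $F_{d-2}(r)=r^{2-d}$ on $[1,\infty)$. Plugging this into the first auxiliary identity with $\gamma=d-2$ gives $F_d$ in closed form ($(1-r^2)^{-1}$ on $(0,1)$, $r^{2-d}(r^2-1)^{-1}$ on $(1,\infty)$), and the second identity with $\gamma=d-4$ becomes the ODE $F_{d-4}''+\tfrac{d-1}{r}F_{d-4}'=2(4-d)F_{d-2}$, whose solution is pinned down by $F_{d-4}(0)=1$, smoothness at $0$, continuity at $r=1$, and the growth at infinity, to be $F_{d-4}(r)=1+\tfrac{4-d}{d}r^2$ on $(0,1]$ and $r^{4-d}+\tfrac{4-d}{d}r^{2-d}$ on $[1,\infty)$. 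Substituting these explicit formulas, $\mathcal B(r)$ collapses to $0$ on $(0,1]$ and to $4(d-2)r^{2-d}(r^2-1)$ on $[1,\infty)$; hence $\phi(r)/r$ is constant on $(0,1]$ and has derivative $-(d-2)r^{-1-d}(r^2-1)<0$ on $(1,\infty)$, and, being $C^1$ with vanishing derivative at $r=1$, it is decreasing on all of $(0,\infty)$.

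For i), $\beta\in(0,d-3]$ and $d\ge4$, so all of $F_{\beta+2},F_\beta,F_{\beta-2}$ are positive and the only threat to $\mathcal B>0$ is the middle term when $(\beta-1)r^2<\beta+1$. I would split into three regimes. For $r$ large: the elementary bounds $(r-1)^2\le P\le(r+1)^2$ give $F_{\gamma-2}\ge(r-1)^2F_\gamma$ and $F_{\gamma+2}\ge(r+1)^{-2}F_\gamma$; applying these to the outer two terms of $\mathcal B$ yields $\mathcal B(r)\ge4rF_\beta(r)\big(\beta r-(\beta+1)\big)>0$ once $r>1+1/\beta$. For $r$ small: $\phi$ is odd with $\phi(0)=0$, so $(\phi(r)/r)'=\tfrac13\phi'''(0)\,r+o(r)$, and computing the needed moments of $\md\sigma$ one finds $\tfrac16\phi'''(0)=-\tfrac{\beta}{2d(d+2)}(d-\beta)(d-\beta-2)$, which is $<0$ precisely because $0<\beta\le d-3<d-2$; hence $(\phi(r)/r)'<0$ for all small $r>0$. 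For intermediate $r$ (the remaining compact range) I would return to $\mathcal B(r)=4r\dashint_{S_1}N(r,y_1)\,|re_1-y|^{-(\beta+2)}\,\md\sigma$ with $N(r,t)=\beta r(r-t)^2-t(r^2-2rt+1)$, a convex quadratic in $t$ with $N(r,-1)>0$ and $N(r,1)=(r-1)^2(\beta r-1)$, so $N(r,\cdot)$ is nonnegative on $[-1,1]$ outside a short subinterval abutting $t=1$; using the monotonicity of $t\mapsto|re_1-y|^{-(\beta+2)}$ together with the strict monotonicity of $F_\beta$ (from the second identity with $\gamma=\beta$: since $\beta(\beta-d+2)=\beta(4-\alpha-d)<0$, $r^{d-1}F_\beta'$ decreases from $0$, so $F_\beta$ is strictly decreasing) one dominates the small negative contribution by the positive bulk.

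The hard part will be this last, intermediate--$r$ step. Near $r=1$ the positive terms of $\mathcal B$ carry no margin at all --- for $\alpha=4-d$ the bracket genuinely vanishes there --- so the estimate must be essentially sharp and must use the precise relative sizes of $F_{\beta-2}$ and $(r^2-1)^2F_{\beta+2}$ against $F_\beta$; obtaining this uniformly over the whole range $\alpha\in[5-d,2)$ (which, for large $d$, includes negative $\alpha$, reversing the monotonicity of $F_{\beta-2}$) is what forces the separate treatments of $\alpha=4-d$, of $\alpha\in[5-d,2)$ here, and of $\alpha\in(4-d,5-d)$ in the later section.
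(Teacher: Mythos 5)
Your reduction to the positivity of $\mathcal B(r)=\beta(r^2-1)^2F_{\beta+2}+2\big((\beta-1)r^2-(\beta+1)\big)F_\beta+(\beta+2)F_{\beta-2}$ is correct (I checked the differentiation under the integral and the substitution $y_1=(r^2+1-P)/(2r)$), and your treatment of part ii) is a complete, genuinely different argument from the paper's: you get the three averages in closed form from Newton's theorem plus the radial ODE for $F_{d-4}$, whereas the paper evaluates the analogous quantities $\Psi_k=\int_0^\pi(\sin\theta)^kA^{-k}\,\md\theta$ by recognizing $\omega_{k+1}\Psi_k(r)$ as the spherical average of the harmonic function $|x-re_1|^{-k}$ on the unit ball of $\bR^{k+2}$ and invoking the mean value property together with the inversion $A(r,\theta)=rA(1/r,\theta)$. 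The two routes are of comparable length; yours has the advantage of staying in the ambient dimension $d$ and producing $F_d$ and $F_{d-4}$ explicitly as byproducts.

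Part i), however, is not a proof: the intermediate range of $r$ --- which you yourself flag as ``the hard part'' --- is exactly where the content of the lemma lives, and no argument is supplied there. Concretely, at $r=1$ with $\alpha=5-d$ (so $\beta=d-3$) the first term of $\mathcal B$ vanishes (the factor $(r^2-1)^2$ kills the logarithmic divergence of $F_{d-1}$), and positivity reduces to the sharp inequality $(d-1)F_{\beta-2}(1)>4F_\beta(1)$, i.e.\ a strict comparison of two Beta-function values; this cannot be obtained from qualitative statements like ``the positive bulk dominates the short negative subinterval of $N(r,\cdot)$'' --- it requires an exact computation, and the margin degenerates to zero as $\alpha\downarrow 4-d$. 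Nor does compactness of the intermediate interval help, since pointwise strict positivity there is precisely the assertion to be proved. The paper closes this step by a global identity rather than a case analysis in $r$: two further integrations by parts in $\theta$ convert $r\phi'-\phi$ into $-r^3(2-\alpha)\frac{\omega_{d-1}}{\omega_d}\int_0^\pi (\sin\theta)^{d-2}A^{\alpha-4}\,Q\big(\sin^2\theta/A^2\big)\,\md\theta$ with $Q(s)=1-\frac{2(4-\alpha)}{d-1}s+\frac{(4-\alpha)(6-\alpha)}{(d-1)(d+1)}s^2$, and the condition $d+\alpha\ge5$ is exactly what makes $Q\ge0$ on $[0,1]$ (a discriminant/AM--GM argument), with strictness a.e.\ because $A>|\sin\theta|$ except at one point. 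If you want to complete your approach you would need an analogous exact identity for $\mathcal B$ valid uniformly on the intermediate range; as written, part i) has a genuine gap.
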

\begin{remark}
                            \label{rem5}
The proof of Theorem \ref{thm3.1} of \cite{Don10} uses the concavity of $\phi$ when $\alpha=1$; see Lemma 3.2 \cite{Don10}. This property, however, is unavailable for general power $\alpha\in (2-d,2)$ even under the condition $d+\alpha>5$ or $d+\alpha=4$. Indeed, when $\alpha<1$ we have from \eqref{eq16.38} that $\phi\to 0$ as $r\to \infty$. Since $\phi$ is strictly positive on $(0,\infty)$, it cannot be concave on $(0,\infty)$ in this case.
\end{remark}

\begin{proof}[Proof of Lemma \ref{lem2.2}]
We shall prove that under the conditions of the lemma, $\phi/r$ is a $C^1$ function on $(0,\infty)$ and its derivative $\phi'/r-\phi/r^2$ is negative on $(0,\infty)$.
To this end, we rewrite \eqref{eq16.38} as
\begin{equation}
                                    \label{eq16.59}
\phi(r)=\frac {\omega_{d-1}}{\omega_d}\int_0^\pi \frac {(r-\cos\theta)(\sin\theta)^{d-2}}{A^{2-\alpha}(r,\theta)}\,\md \theta,
\end{equation}
where
$$
A(r,\theta)=(1+r^2-2r\cos\theta)^{1/2}
$$
and $\omega_d$ is the surface area of the unit sphere $S_1$ in $\bR^d$.
For $r\in [0,1)\cup (1,\infty)$, a direct computation gives
\begin{equation}
                                \label{eq17.02}
\phi'(r)=\frac {\omega_{d-1}}{\omega_d}\int_0^\pi (\sin\theta)^{d-2}\left(\frac 1 {A^{2-\alpha}}-(2-\alpha)\frac {(r-\cos\theta)^2}{A^{4-\alpha}}\right)\,\md \theta,
\end{equation}
and
\begin{align}
                                \label{eq17.06}
r&\phi'(r)-\phi(r)\nonumber\\
&=\frac {\omega_{d-1}}{\omega_d}\int_0^\pi (\sin\theta)^{d-2}\left(\frac {\cos\theta} {A^{2-\alpha}}-(2-\alpha)r\frac {(r-\cos\theta)^2}{A^{4-\alpha}}\right)\,\md \theta\nonumber\\
&:=\frac {\omega_{d-1}}{\omega_d}(I_1+I_2).
\end{align}
Integration by parts yields
\begin{align}
                                    \label{eq13.09}
I_1&=\int_0^\pi (\sin\theta)^{d-2}\frac {\cos\theta} {A^{2-\alpha}}\,\md \theta=\int_0^\pi \frac {\big((\sin\theta)^{d-1}\big)'} {(d-1)A^{2-\alpha}}\,\md \theta\nonumber\\
&=r\int_0^\pi \frac{2-\alpha}{d-1}\cdot \frac {(\sin\theta)^d} {A^{4-\alpha}}\,\md \theta.
\end{align}
Clearly,
\begin{align*}
I_2&=r\int_0^\pi -(2-\alpha)(\sin\theta)^{d-2}\frac {(r-\cos\theta)^2}{A^{4-\alpha}}\,\md \theta\\
&=r\int_0^\pi -(2-\alpha)(\sin\theta)^{d-2}\frac {A^2-(\sin\theta)^2}{A^{4-\alpha}}\,\md \theta.
\end{align*}
Thus, by \eqref{eq17.06},
\begin{align}
                                \label{eq11.46}
r&\phi'(r)-\phi(r)\nonumber\\
&=\frac {r(2-\alpha)\omega_{d-1}}{(d-1)\omega_d}\int_0^\pi \left(d\frac {(\sin\theta)^d} {A^{4-\alpha}}-(d-1)\frac {(\sin\theta)^{d-2}} {A^{2-\alpha}}\right)\,\md \theta.
\end{align}
We shall first consider Assertion ii), then Assertion i).

{\em Assertion ii): $d+\alpha=4$.} In this case, from \eqref{eq11.46} we have
$$
r\phi'(r)-\phi(r)=\frac {r(2-\alpha)\omega_{d-1}}{(d-1)\omega_d}(d\Psi_d-(d-1)\Psi_{d-2}),
$$
where
\begin{equation}
                                \label{eq17.27}
\Psi_k=\Psi_k(r):=\int_0^\pi \frac {(\sin\theta)^k} {A^{k}}\,\md \theta,\quad k=0,1,2,...
\end{equation}
It is known that $\Psi_k$ can be computed explicitly:
$$
\Psi_k(r)= \left\{\begin{aligned}
&\frac {\omega_{k+2}}{\omega_{k+1}} \quad &\text{on}\quad [0,1],\\
&\frac {\omega_{k+2}}{\omega_{k+1}}r^{-k} \quad &\text{on} \quad (1,\infty).
\end{aligned}\right.
$$
Indeed, this is obviously true when $k=0$. For $k\ge 1$, $\omega_{k+1}\Psi_k(r)$ is the integral of $|x-re_1|^{-k}$ on the unit sphere in $\bR^{k+2}$. Notice that for fixed $r>1$, $|x-re_1|^{-k}$ is a harmonic function on the unit ball in $\bR^{k+2}$. Therefore, by the mean value theorem, we get $\omega_{k+1}\Psi_k(r)=\omega_{k+2}r^{-k}$. This also holds true when $r=1$ by the dominated convergence theorem. The remaining case $r\in (0,1)$ follows from the simple identity $A(r,\theta)=rA(1/r,\theta)$.
Since $\omega_{k+2}=2\pi \omega_k/k$, we get $r\phi'-\phi=0$ on $[0,1)$ and $r\phi'-\phi<0$ on  $(1,\infty)$.
Note that because
$|r-\cos\theta|\le A$ and  $|\sin\theta|\le A$,
the integral on the right-hand side of \eqref{eq17.06} is absolutely convergent and continuous at $r=1$ by the dominated convergence theorem. Thus \eqref{eq17.06} holds on the whole region $[0,\infty)$. So we conclude $r\phi'-\phi=0$ on $[0,1]$ and $r\phi'-\phi<0$ on  $(1,\infty)$.

{\em Assertion i): $d+\alpha\ge 5$.} In this case, we estimate $I_2$ in a different way. For $r\in [0,1)\cup (1,\infty)$, integration by parts gives
\begin{align*}
I_2&=-r(2-\alpha)\int_0^\pi (\sin\theta)^{d-2}\frac {r^2-2r \cos\theta+(\cos\theta)^2}{A^{4-\alpha}}\,\md \theta\\
&=-r(2-\alpha)\int_0^\pi (\sin\theta)^{d-2}\frac {r^2}{A^{4-\alpha}}+\frac{\big((\sin\theta)^{d-1}\big)'}{(d-1)A^{4-\alpha}}(-2r+\cos\theta)\,\md \theta\\
&=-r(2-\alpha)\int_0^\pi (\sin\theta)^{d-2}\frac {r^2}{A^{4-\alpha}}+\frac{(\sin\theta)^{d}}{(d-1)A^{4-\alpha}}\\
&\qquad +\frac{(4-\alpha)r(\sin\theta)^{d}}{(d-1)A^{6-\alpha}}(-2r+\cos\theta)\,\md \theta.
\end{align*}
Integrating by parts again in the last term above, we get
\begin{align*}
I_2&=-r(2-\alpha)\int_0^\pi (\sin\theta)^{d-2}\frac {r^2}{A^{4-\alpha}}+\frac{(\sin\theta)^{d}}{(d-1)A^{4-\alpha}}\\
&\qquad -\frac{2(4-\alpha)r^2(\sin\theta)^{d}}{(d-1)A^{6-\alpha}}+
\frac{(4-\alpha)r\big((\sin\theta)^{d+1}\big)'}{(d-1)(d+1)A^{6-\alpha}}\,\md \theta\\
&=-r(2-\alpha)\int_0^\pi (\sin\theta)^{d-2}\frac {r^2}{A^{4-\alpha}}+\frac{(\sin\theta)^{d}}{(d-1)A^{4-\alpha}}\\
&\qquad -\frac{2(4-\alpha)r^2(\sin\theta)^{d}}{(d-1)A^{6-\alpha}}+
\frac{(4-\alpha)(6-\alpha)r^2(\sin\theta)^{d+2}}{(d-1)(d+1)A^{8-\alpha}}\,\md \theta.
\end{align*}
Therefore, by using \eqref{eq17.06}, \eqref{eq13.09}, and the equality above, we obtain
\begin{align}
                                \label{eq16.04}
r\phi'(r)-\phi(r)&=-r^3(2-\alpha)\frac {\omega_{d-1}}{\omega_d}\int_0^\pi \frac {(\sin\theta)^{d-2}}{A^{4-\alpha}}\Big(1-\frac{2(4-\alpha)(\sin\theta)^{2}}{(d-1)A^{2}}\nonumber\\
&\qquad +\frac{(4-\alpha)(6-\alpha)(\sin\theta)^{4}}{(d-1)(d+1)A^{4}}\Big)\,\md \theta.
\end{align}
Since $d+\alpha\ge 5$ and $\alpha<2$, it holds that
$$
1\ge \frac{(4-\alpha)(d+1)}{(d-1)(6-\alpha)},
$$
which yields
\begin{multline*}
1-\frac{2(4-\alpha)(\sin\theta)^{2}}{(d-1)A^{2}}
+\frac{(4-\alpha)(6-\alpha)(\sin\theta)^{4}}{(d-1)(d+1)A^{4}}\\
\ge\frac{4-\alpha}{d-1}\cdot\Big(\frac{d+1}{6-\alpha}-\frac{2(\sin\theta)^{2}}{A^{2}}
+\frac{(6-\alpha)(\sin\theta)^{4}}{(d+1)A^{4}}\Big)
\ge 0.
\end{multline*}
The last inequality above is in fact strict almost everywhere because $A>|\sin\theta|$ except at a single point where $\cos\theta=r$.
Therefore, we get from \eqref{eq16.04} that
$r\phi'(r)-\phi(r)<0$ on $(0,1)\cup (1,\infty)$.
As before, the integral on the right-hand side of \eqref{eq17.06} is absolutely convergent and continuous at $r=1$ by the dominated convergence theorem. Furthermore, it is easy to check that
$$
\lim_{r\to 1} \{r\phi'(r)-\phi(r)\}<0;
$$
See also \eqref{eq12.46} below.
We thus conclude that $r\phi'(r)-\phi(r)<0$ on $(0,\infty)$. The lemma is proved
\end{proof}

We are now in the position to complete the proof of Theorem \ref{thm2.1}.

\begin{proof}[Proof of Theorem \ref{thm2.1}]
With Lemma \ref{lem2.2} in hand, the proof essentially follows an idea in the proof of Theorem 3.1 \cite{Don10}, by observing that, instead of the concavity of $\phi$, the proof of Theorem 3.1 \cite{Don10} only uses the monotonicity property of $\phi/r$.

We use a contradiction argument. Suppose $(\mu_t)$ is a radially symmetric first-kind similarity measure solution such that there are two numbers
$$0<r_1<r_2,\quad r_1,r_2\in\text{supp}\,\hat\mu_0.
$$
Denote $w(r):=|v_0(r e_1)|$. By \eqref{eq16.39},
$$
w(r_k)=\int_0^\infty \phi(r_k/\rho)\rho^{\alpha-1}\,\md \hat \mu_0(\rho),\quad k=1,2.
$$
It follows from Lemma \ref{lem2.2} that
$$
\frac {1}{r_1}\phi(r_1/\rho)\ge \frac {1}{r_2} \phi(r_2/\rho)\quad \forall \rho\in (0,\infty),
$$
with the inequality being strict for $\rho$ in a small neighborhood of $r_1$ because $r_2/r_1>1$. Since $r_1\in \text{supp}\,\hat\mu_0$, we get
$$
\frac {1}{r_1}w(r_1)> \frac {1}{r_2} w(r_2).
$$
However, from \eqref{eq16.08} $w$ is a linear function on $\text{supp}\,\hat\mu_0$:
$$
\frac {1}{r_1}w(r_1)= \frac {1}{r_2} w(r_2).
$$
Therefore, we reach a contradiction. The theorem is proved.
\end{proof}

\mysection{The case $d+\alpha\in (4,5)$}    \label{sec4}

In Theorem \ref{thm2.1}, we proved that double or multi delta-ring similarity solutions to the aggregation equation \eqref{Agg} solution does not exist when $\alpha+d\ge 5$ or $\alpha+d=4$ by showing that any radially symmetric first-kind similarity solution is of the form
\begin{equation*}
                                    %\label{eq10.40}
\hat \mu_0=m_0\delta_0+m_1\delta_{\rho_1}.
\end{equation*}
In this section we show that this characterization result is still true in the case $\alpha+d\in (4,5)$.
\begin{theorem}
                                    \label{thm4.1}
Let $d\ge 3$ be an integer, $\alpha\in [4-d,2)$, and $\nabla K(x)=x|x|^{\alpha-2}$. Then
any radially symmetric first-kind similarity measure solution is of the form
$$
\mu_t(x)=\frac 1 {R(t)^d} \mu_0\left(\frac x {R(t)}\right),
$$
where
\begin{equation*}
                                    %\label{eq10.40}
\hat \mu_0=m_0\delta_0+m_1\delta_{\rho_1}
\end{equation*}
for some constants $m_0,m_1\ge 0$ and $\rho_1>0$.
\end{theorem}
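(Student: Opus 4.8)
\textbf{Proof plan for Theorem \ref{thm4.1}.}
The plan is to reduce Theorem \ref{thm4.1} to the monotonicity statement already exploited for Theorem \ref{thm2.1}: that $\phi(r)/r$ is strictly decreasing on $(0,\infty)$. Granting this, the contradiction argument at the end of Section \ref{sec2} carries over verbatim — if $0<r_1<r_2$ both lay in $\mathrm{supp}\,\hat\mu_0$ then $w(r_1)/r_1>w(r_2)/r_2$ for $w(r)=|v_0(re_1)|$, contradicting the linearity of $w$ on $\mathrm{supp}\,\hat\mu_0$ coming from \eqref{eq16.08} — and one concludes $\hat\mu_0=m_0\delta_0+m_1\delta_{\rho_1}$. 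Since $\alpha\in[5-d,2)$ and $\alpha=4-d$ are already covered by Theorem \ref{thm2.1}, it remains to establish the monotonicity when $d+\alpha\in(4,5)$; note $\alpha\ne 0$ in this range. The difficulty is that the nonnegativity argument for the integrand of \eqref{eq16.04} fails once $d+\alpha<5$, so I will not work with $r\phi'-\phi$ in its $\theta$-integral form.

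Instead I would re-express $r\phi'(r)-\phi(r)$ through the spherical averages
$$
G(r):=\dashint_{S_1}|re_1-y|^{\alpha}\,\md\sigma(y),\qquad g(r):=\dashint_{S_1}|re_1-y|^{\alpha-2}\,\md\sigma(y).
$$
Since $\partial_r|re_1-y|^{\alpha}=\alpha\,(r-y_1)|re_1-y|^{\alpha-2}$, differentiation under the integral gives $\alpha\phi(r)=G'(r)$; and since $\Delta K(x)=(d+\alpha-2)|x|^{\alpha-2}$, applying $\Delta$ to $x\mapsto\dashint_{S_1}K(x-y)\,\md\sigma(y)=G(|x|)/\alpha$ gives, for $r\ne1$, $G''(r)+\tfrac{d-1}{r}G'(r)=\alpha(d+\alpha-2)g(r)$, i.e. $r\phi'(r)+(d-1)\phi(r)=(d+\alpha-2)rg(r)$. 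Using in addition $\phi=rg-h$ with $h(r):=\dashint_{S_1}y_1|re_1-y|^{\alpha-2}\,\md\sigma(y)$ and $y_1=\tfrac{1+r^2-|re_1-y|^2}{2r}$, these combine into
$$
r\phi'(r)-\phi(r)=-\frac{1}{2r}\,P(r),\qquad P(r):=dG(r)-g(r)\big(d+(2\alpha+d-4)r^2\big),
$$
so the goal becomes to show $P(r)>0$ for all $r>0$. (Note $P\equiv0$ on $(0,1)$ when $\alpha=4-d$, which is the source of the degeneracy below.)

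For $0<r<1$ I would use the classical expansion $G(r)={}_2F_1\big(-\tfrac\alpha2,\,1-\tfrac d2-\tfrac\alpha2;\,\tfrac d2;\,r^2\big)=\sum_{m\ge0}G_m r^{2m}$. From $\alpha\phi=G'$ we get $\phi(r)/r=\alpha^{-1}\sum_{m\ge1}2mG_m r^{2m-2}$, hence $(\phi(r)/r)'=\alpha^{-1}\sum_{m\ge2}2m(2m-2)G_m r^{2m-3}$, and it is enough to show $\alpha^{-1}G_m<0$ for all $m\ge2$. In the Pochhammer products for $G_m$, the factor $j-\tfrac\alpha2$ is positive for $j\ge1$ (since $\alpha<2$), while $j+1-\tfrac d2-\tfrac\alpha2=j-\tfrac{d+\alpha-2}{2}$ is negative for exactly $j=0,1$ (because $1<\tfrac{d+\alpha-2}{2}<\tfrac32$) and positive for $j\ge2$; thus for $m\ge2$ the sign of $(-\tfrac\alpha2)_m$ is $-\mathrm{sgn}\,\alpha$, the sign of $(1-\tfrac d2-\tfrac\alpha2)_m$ is $+$, so $\mathrm{sgn}\,G_m=-\mathrm{sgn}\,\alpha$ as wanted. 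Therefore $\phi/r$ is strictly decreasing, equivalently $P>0$, on $(0,1)$. For $r>1$ I would use the reflection $A(r,\theta)=rA(1/r,\theta)$, giving $G(s)=s^{\alpha}G(1/s)$ and $g(s)=s^{\alpha-2}g(1/s)$, so that $P(s)=s^{\alpha}\widetilde P(1/s)$ with
$$
\widetilde P(r):=dG(r)-g(r)\big(dr^2+2\alpha+d-4\big)=P(r)+(4-2\alpha)(1-r^2)g(r);
$$
for $r\in(0,1)$ each of $P(r)$, $4-2\alpha$, $1-r^2$, $g(r)$ is positive, so $\widetilde P(r)>0$ and hence $P(s)>0$ for $s>1$. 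Finally $P(1)>0$ follows from the continuity of $P$ (the defining integrals converge since $\alpha>2-d$) together with the strict monotonicity of $P$ on $(0,1)$. This gives $P>0$ on $(0,\infty)$, $\phi(r)/r$ strictly decreasing, and the theorem then follows by the argument of Theorem \ref{thm2.1}.

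The main obstacle I foresee is the coefficient sign analysis. Because $P$ degenerates to $0$ on $(0,1)$ at $\alpha=4-d$, its two leading Taylor coefficients vanish and one must control the signs of all the rest; equivalently, writing $P(r)=\sum_{m\ge2}c_m r^{2m}$, one reduces after simplification to
$$
c_m=\frac{(1-\tfrac\alpha2)_{m-1}\,(2-\tfrac{d+\alpha}{2})_{m-1}}{(\tfrac d2)_{m-1}\,(m-1)!}\cdot\frac{4(1-m)(d+\alpha-2)}{d+2m-2}>0,\qquad m\ge2,
$$
whose positivity rests exactly on $2-\tfrac{d+\alpha}{2}\in(-\tfrac12,0)$, i.e. on $d+\alpha\in(4,5)$. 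It is this input — the hypergeometric representation together with the precise location of the parameter — that substitutes for the concavity of $\phi$ (unavailable by Remark \ref{rem5}) and for the positive quadratic form used in Section \ref{sec2}.
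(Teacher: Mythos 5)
Your proposal is correct, and it reaches the same key fact as the paper --- that $\phi(r)/r$ is strictly decreasing on $(0,\infty)$ when $d+\alpha\in(4,5)$, the only case not already covered by Theorem \ref{thm2.1} --- after which the contradiction argument from the proof of Theorem \ref{thm2.1} finishes the job exactly as you describe. The overall strategy coincides with the paper's (reduce to $r\in(0,1]$ by the inversion $A(r,\theta)=rA(1/r,\theta)$, then show all Taylor coefficients of $r\phi'(r)-\phi(r)$ on $(0,1)$ have one sign, with the sign hinging on $\tfrac{d+\alpha-2}{2}\in(1,\tfrac32)$), but your execution is genuinely different. The paper obtains the series by a change of variables in the $\theta$-integral followed by a ``direct but quite lengthy computation'' (yielding \eqref{eq14.29}--\eqref{eq14.30} with the recursion $a_{k+1}=a_k(2k+\gamma)(d+2k-2+\gamma)$) and checks $r=1$ by an explicit Beta-function evaluation \eqref{eq12.46}. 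You instead exploit $\Delta K=(d+\alpha-2)|x|^{\alpha-2}$ together with $y_1=(1+r^2-A^2)/(2r)$ to write $r\phi'-\phi=-P/(2r)$ in terms of the spherical means $G$ and $g$, and then read the coefficient signs off the classical hypergeometric representation of $G$; since $2k+\gamma=2\bigl(k+2-\tfrac{d+\alpha}{2}\bigr)$ and $d+2k-2+\gamma=2\bigl(k+1-\tfrac{\alpha}{2}\bigr)$, your $c_m$ and the paper's coefficients generate the same series, but your route avoids the lengthy computation and makes visible why the argument is confined to $d+\alpha\in(4,5)$. Your treatment of $r>1$ (comparing $P$ with $\widetilde P$ rather than rescaling the two integrals separately) is also a valid variant of the paper's reduction.

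Two small points to tighten. First, the formula $G(r)={}_2F_1\bigl(-\tfrac{\alpha}{2},1-\tfrac{d}{2}-\tfrac{\alpha}{2};\tfrac{d}{2};r^2\bigr)$ should be cited or derived (Funk--Hecke/Gegenbauer expansion); it is correct, but it is the one external input your argument rests on. Second, at $r=1$ you should say explicitly that $P$ is continuous there by dominated convergence (this uses $d+\alpha>3$, amply satisfied), exactly as the paper argues for \eqref{eq17.06}; combined with the strict increase of $P$ on $(0,1)$, which follows from your positive coefficients $c_m$, this yields $P(1)>0$ as you assert. Neither point is a gap in substance.
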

In fact, we shall prove that the claim of Lemma \ref{lem2.2} i) still holds if we only assume $d\ge 3$ and $d+\alpha> 4$. As soon as this is verified, one immediately proves Theorem \ref{thm4.1} by using the very same proof of Theorem \ref{thm2.1}. It seems to us that the remaining case $d+\alpha\in (4,5)$ is more involved. One of the difficulties to adapt the proof of Lemma \ref{lem2.2} is as follows. When $d+\alpha\ge 5$, the {\em strict} negativity of $r\phi'-\phi$ is proved by obtaining a complete square in the integrand. This method does not work when $d+\alpha\in (4, 5)$, since in the limiting case $d+\alpha=4$ the function $r\phi'-\phi$ is equivalent to zero for $r\in [0,1]$.

Here we prove Lemma \ref{lem2.2} i) for $d\ge 3$ and $d+\alpha\in (4,5)$ by directly evaluating the coefficients of the series expansion of \eqref{eq11.46}. In the sequel, we always assume $d\ge 3$ and  $d+\alpha\in (4,5)$, and we write $d+\alpha=4-\gamma$ for some $\gamma\in (-1,0)$.

First of all,
we observe that, for any $r\in (0,\infty)$ and $\theta\in (0,\pi)$,
$$
A(r,\theta)=rA(1/r,\theta).
$$
Thus, we have
$$
\int_0^\pi \frac {(\sin\theta)^d} {(A(r,\theta))^{4-\alpha}}\,\md \theta
=r^{\alpha-4}\int_0^\pi \frac {(\sin\theta)^d} {(A(1/r,\theta))^{4-\alpha}}\,\md \theta
$$
and
$$
\int_0^\pi\frac {(\sin\theta)^{d-2}} {(A(r,\theta))^{2-\alpha}}\,\md \theta
=r^{\alpha-2}\int_0^\pi\frac {(\sin\theta)^{d-2}} {(A(1/r,\theta))^{2-\alpha}}\,\md \theta.
$$
Therefore, by \eqref{eq11.46}, to show
$$
\psi(r):=\frac {(d-1)\omega_d}{r(2-\alpha)\omega_{d-1}}\cdot(r\phi'-\phi)<0\quad \text{in}\,\, [0,\infty),
$$
it suffices to prove
\begin{equation}
                                    \label{eq17.36}
\psi(r)<0\quad \forall r\in (0,1].
\end{equation}
We shall prove \eqref{eq17.36} in the remaining part of the section.
Clearly, by using the property of $\Psi_k$ defined in \eqref{eq17.27}, we have $\psi(0)=0$. At $r=1$, we have $A(r,\theta)=2\sin(\theta/2)$ for $\theta\in [0,\pi]$. Thus, we compute
\begin{align}
\psi&(1)=\int_0^\pi \left(d\frac {(\sin\theta)^d} {(2\sin(\theta/2))^{d+\gamma}}-(d-1)\frac {(\sin\theta)^{d-2}} {(2\sin(\theta/2))^{d-2+\gamma}}\right)\,\md \theta\nonumber\\
&=2^{-\gamma}\int_0^\pi \left(d \cos^d\Big(\frac {\theta} 2\Big)-(d-1)\cos^{d-2}\Big(\frac {\theta} 2\Big) \right)\sin^{-\gamma}\Big(\frac {\theta} 2\Big)\,\md \theta\nonumber\\
&=2^{-\gamma}\left(dB\Big(\frac {d+1} 2,\frac {1-\gamma}2\Big)-(d-1)B\Big(\frac {d-1} 2,\frac {1-\gamma} 2\Big)\right)\nonumber\\
                                \label{eq12.46}
&=2^{-\gamma} \frac {\gamma(d-1)}{d-\gamma}B\Big(\frac {d-1} 2,\frac {1-\gamma} 2\Big)<0.
\end{align}

For $r\in (0,1)$, we use change of variables to get
\begin{align*}
\psi(r)&=\int_0^\pi \left(d\frac {(\sin\theta)^d} {A^{d+\gamma}}-(d-1)\frac {(\sin\theta)^{d-2}} {A^{d+\gamma-2}}\right)\,\md \theta\\
&=\int_{-1}^1 \left(\frac {d(1-t^2)^{(d-1)/2}} {(1+r^2-2rt)^{(d+\gamma)/2}}-\frac {(d-1)(1-t^2)^{(d-3)/2}} {(1+r^2-2rt)^{(d+\gamma-2)/2}}\right)\,\md t\\
&=\frac 1 {2r}\int_{(1-r)^2}^{(1+r)^2} \Big(d s^{-(d+\gamma)/2}\big(1-(1+r^2-s)^2/(2r)^2\big)^{(d-1)/2} \\
&\qquad-(d-1) {s^{-(d+\gamma-2)/2}}\big(1-(1+r^2-s)^2/(2r)^2\big)^{(d-3)/2}\Big)\,\md s.
\end{align*}
The last integral can be evaluated explicitly. We expand the outcome in a series with respect to $r$. After a direct but quite lengthy computation, we obtain that, for $d\ge 3$ odd,
\begin{equation}
                            \label{eq14.29}
\psi(r)=-2\sum_{k=1}^\infty \frac{(d-1)!!}{(d+2k)!!(2k-2)!!}a_k r^{2k},
\end{equation}
and for $d\ge 4$ even,
\begin{equation}
                            \label{eq14.30}
\psi(r)=-\pi\sum_{k=1}^\infty \frac{(d-1)!!}{(d+2k)!!(2k-2)!!}a_k r^{2k},
\end{equation}
where
$$
a_1=-\gamma(2-\gamma),\quad a_{k+1}=a_k (2k+\gamma)(d+2k-2+\gamma),\quad k=1,2,\ldots.
$$
It is clear that, for $\gamma\in (-1,0)$, the series above are absolutely convergent in $[0,1)$ and all the coefficients are strictly negative. Therefore, we conclude $\psi(r)<0$ in $(0,1)$ and Theorem \ref{thm4.1} is proved.
\begin{remark}
We note that the argument above does not apply to the case $\gamma\le -1$. Thus it seems unlikely to unify the proofs of Lemma \ref{lem2.2} for different regimes of $\alpha$ in this section and Section \ref{sec2}.
\end{remark}

\mysection{Existence of multi delta-ring solutions}
                                \label{sec3}

In this section, we shall construct multi delta-ring similarity solutions to \eqref{Agg} when $d+\alpha<4$. Let $n\ge 2$ be an integer. The $n$ delta-ring solutions are of the form
$$
\mu_t(x)=\frac 1 {R(t)^d} \mu_0\left(\frac x {R(t)}\right),
$$
where
\begin{equation*}
                                    \label{eq10.40}
\hat \mu_0=\sum_{k=1}^n m_k\delta_{\rho_k}
\end{equation*}
for some positive constants $m_k,\rho_k,k=1,\ldots,n$ satisfying
$$
\sum_{k=1}^n m_k=1,\quad 0<\rho_1<\rho_2<\ldots<\rho_n<\infty.
$$
In the 2D case with $\alpha=1$, the existence of two delta-ring solutions  was obtained in \cite{BCL} by using a fixed point argument. The main result of this section is the following theorem, which gives the existence of $n$ delta-ring solutions for any $n\ge 2$ when $\alpha+d<4$.

\begin{theorem}[Existence of multi delta-ring similarity solutions]
                                        \label{thm3.1}
Let $d\ge 1$ be an integer and $\alpha<2$ such that $\alpha\in (2-d,4-d)$. Then for any integer $n\ge 2$, there exists an $n$ delta-ring solutions to \eqref{Agg}.
\end{theorem}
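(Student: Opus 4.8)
The plan is to reduce the existence of $n$ delta-ring similarity solutions to finding a \emph{positive} solution of a linear algebraic system for the masses, and then to produce such a solution by spacing the radii $\rho_k$ far apart. By the discussion in Section~\ref{sec2}, the measure $\mu_t(x)=R(t)^{-d}\mu_0(x/R(t))$ with $\hat\mu_0=\sum_{k=1}^n m_k\delta_{\rho_k}$ is a first-kind similarity solution of \eqref{Agg} if and only if $v_0=-\lambda x$ on $\text{supp}\,\mu_0$ for some $\lambda>0$, i.e. \eqref{eq16.08} holds. Using \eqref{eq16.39} and writing $w(r)=|v_0(re_1)|=\int_0^\infty\phi(r/\rho)\rho^{\alpha-1}\,\md\hat\mu_0(\rho)$, this is precisely the system
\begin{equation*}
\sum_{k=1}^n m_k\,\phi(\rho_j/\rho_k)\,\rho_k^{\alpha-1}=\lambda\,\rho_j,\qquad j=1,\dots,n,
\end{equation*}
subject to $m_k>0$, $\sum_k m_k=1$ and $\lambda>0$. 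Multiplying all $m_k$ by a common positive constant multiplies $\lambda$ by its reciprocal, so it suffices to solve this system with $\lambda=1$ for some positive $m_k$ and some $0<\rho_1<\dots<\rho_n$; dividing the resulting $m_k$ by their sum then restores $\sum_k m_k=1$ and yields a positive $\lambda$.

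To solve the $\lambda=1$ system, I substitute $\rho_k=e^{t_k}$, set $\mu_k:=m_k\rho_k^{\alpha-2}=m_k e^{(\alpha-2)t_k}$, and introduce $g(s):=\phi(e^s)/e^s$. Since $\phi(\rho_j/\rho_k)/\rho_j=g(t_j-t_k)/\rho_k$, dividing the $j$th equation by $\rho_j$ turns the system into
\begin{equation*}
\sum_{k=1}^n g(t_j-t_k)\,\mu_k=1,\qquad j=1,\dots,n,
\end{equation*}
i.e. $G\vec\mu=\vec 1$ with $G_{jk}=g(t_j-t_k)$. The function $g$ is continuous on $\bR$, and from the properties of $\phi$ recalled in Section~\ref{sec2} (checked directly when $d=1,2$) we have $g(0)=\phi(1)>0$, $\lim_{s\to+\infty}g(s)=\lim_{r\to\infty}\phi(r)/r=0$ by \eqref{eq11.30}, and $\lim_{s\to-\infty}g(s)=\lim_{r\to0^+}\phi(r)/r=\phi'(0)$, the last limit existing because $\phi$ is smooth near the origin with $\phi(0)=0$. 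Taking $t_k=kT$ and letting $T\to\infty$ we get $t_j-t_k=(j-k)T\to\pm\infty$ for $j\ne k$, so $G$ converges entrywise to the upper triangular matrix $G_\infty$ with $(G_\infty)_{jj}=\phi(1)$, $(G_\infty)_{jk}=\phi'(0)$ for $j<k$, and $(G_\infty)_{jk}=0$ for $j>k$.

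Since $\phi(1)>0$, the matrix $G_\infty$ is invertible, and back-substitution gives the closed-form solution of $G_\infty\vec\mu^\infty=\vec 1$,
\begin{equation*}
\mu^\infty_i=\frac{1}{\phi(1)}\Big(1-\frac{\phi'(0)}{\phi(1)}\Big)^{\,n-i},\qquad i=1,\dots,n,
\end{equation*}
all of whose entries are strictly positive \emph{precisely because} $\phi'(0)<\phi(1)$. As matrix inversion is continuous near the invertible matrix $G_\infty$ and $G\to G_\infty$ when $T\to\infty$, for $T$ large enough $G$ is invertible and $\vec\mu=G^{-1}\vec 1$ is so close to $\vec\mu^\infty$ that all its entries are positive. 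Undoing the substitution, $\rho_k=e^{kT}$ and $m_k=\mu_k e^{(2-\alpha)kT}>0$ with $0<\rho_1<\dots<\rho_n$; after dividing the $m_k$ by their sum (which produces a positive $\lambda$), the first paragraph shows that $\hat\mu_0=\sum_k m_k\delta_{\rho_k}$ generates the desired $n$ delta-ring similarity solution of \eqref{Agg}.

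The only substantial point is the strict inequality $\phi'(0)<\phi(1)$ for $\alpha\in(2-d,4-d)$; this is exactly the step that breaks down at the endpoint $\alpha=4-d$, where Lemma~\ref{lem2.2}~ii) gives $\phi(r)/r\equiv\phi(1)$ on $(0,1]$ so that $\phi'(0)=\phi(1)$, and it is reversed for $\alpha>4-d$ by Lemma~\ref{lem2.2}~i), so the hypothesis $\alpha<4-d$ is essential. I would establish it by evaluating both sides in closed form: \eqref{eq17.02} at $r=0$ (where $A\equiv1$) gives $\phi'(0)=\frac{\omega_{d-1}}{\omega_d}\cdot\frac{d-2+\alpha}{d}\int_0^\pi(\sin\theta)^{d-2}\,\md\theta$, while $A(1,\theta)=2\sin(\theta/2)$ turns \eqref{eq16.59} at $r=1$ into a Beta integral $\phi(1)=\frac{\omega_{d-1}}{\omega_d}\,2^{d+\alpha-3}B\big(\tfrac{d-1+\alpha}{2},\tfrac{d-1}{2}\big)$, after which the claim is an elementary comparison of Beta functions. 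Alternatively, for $d\ge3$ one may write $\phi(1)-\phi'(0)=\int_0^1\big(r\phi'(r)-\phi(r)\big)r^{-2}\,\md r$, which by \eqref{eq11.46} has the sign of the function $\psi$ of Section~\ref{sec4}; the series \eqref{eq14.29}--\eqref{eq14.30}, now evaluated with $\gamma=4-d-\alpha\in(0,2)$, has all coefficients of one sign and shows $\psi>0$ on $(0,1)$, hence $\phi'(0)<\phi(1)$.
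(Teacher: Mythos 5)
Your proposal is correct and follows essentially the same route as the paper: reduce to the linear system for the masses, let the ratios of consecutive radii tend to infinity so the coefficient matrix converges to the upper-triangular $B_\infty$ with diagonal $\phi(1)$ and off-diagonal $\phi'(0)$, solve the limit system explicitly using $\phi'(0)<\phi(1)$, and conclude by continuity of matrix inversion; your closed forms for $\phi'(0)$ and $\phi(1)$ and your two suggested proofs of the key inequality coincide with the paper's Lemma 4.2 (Beta/Gamma comparison via the duplication formula and log-convexity) and with the remark that the series expansions of Section 3 give $\psi>0$ on $(0,1)$. The only blemish is the sentence claiming that multiplying all $m_k$ by a constant multiplies $\lambda$ by its \emph{reciprocal} -- in fact $\lambda$ scales by the \emph{same} constant -- but the normalization you then perform is still valid, so this is harmless.
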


The lemma below will be used in the proof of Theorem \ref{thm3.1}. We postpone the proof of it to the end of this section.
\begin{lemma}
                                \label{lem2}
Let $d\ge 1$ be an integer and $\alpha<2$ such that $\alpha\in (2-d,4-d)$.
Then we have
\begin{equation}
                                    \label{eq14.39}
\phi(1)>\phi'(0)>0.
\end{equation}
\end{lemma}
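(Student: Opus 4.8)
The plan is to evaluate $\phi'(0)$ and $\phi(1)$ in closed form and thereby reduce \eqref{eq14.39} to an elementary inequality between Gamma functions, which I then settle by a log-convexity argument. I would first dispose of the case $d=1$ directly: there \eqref{eq16.38} with $S_1=\{\pm e_1\}$ gives $\phi(r)=\tfrac12\bigl((1+r)^{\alpha-1}+\operatorname{sgn}(r-1)\,|r-1|^{\alpha-1}\bigr)$, the hypotheses force $\alpha\in(1,2)$, and one reads off $\phi'(0)=\alpha-1>0$ and $\phi(1)=2^{\alpha-2}$; the inequality $2^{\alpha-2}>\alpha-1$ on $(1,2)$ follows from convexity of $g(\alpha):=2^{\alpha-2}-(\alpha-1)$ together with $g(2)=0$ and $g'(2)=\log2-1<0$. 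So assume $d\ge2$ and use the polar forms \eqref{eq16.59} and \eqref{eq17.02}. Evaluating \eqref{eq17.02} at $r=0$ (where $A(0,\theta)=1$), writing $1-(2-\alpha)\cos^2\theta=(\alpha-1)+(2-\alpha)\sin^2\theta$, and using $\int_0^\pi(\sin\theta)^d\,\md\theta=\tfrac{d-1}{d}\int_0^\pi(\sin\theta)^{d-2}\,\md\theta$, one obtains
\[
\phi'(0)=\frac{\omega_{d-1}}{\omega_d}\cdot\frac{d+\alpha-2}{d}\int_0^\pi(\sin\theta)^{d-2}\,\md\theta=\frac{\omega_{d-1}}{\omega_d}\cdot\frac{d+\alpha-2}{d}\,B\Bigl(\tfrac{d-1}{2},\tfrac12\Bigr),
\]
which is positive since $\alpha>2-d$. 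At $r=1$, where $A(1,\theta)=2\sin(\theta/2)$ and $1-\cos\theta=2\sin^2(\theta/2)$, the identity $\sin\theta=2\sin(\theta/2)\cos(\theta/2)$ and the substitution $u=\theta/2$ turn \eqref{eq16.59} into
\[
\phi(1)=\frac{\omega_{d-1}}{\omega_d}\,2^{d+\alpha-2}\int_0^{\pi/2}(\sin u)^{d+\alpha-2}(\cos u)^{d-2}\,\md u=\frac{\omega_{d-1}}{\omega_d}\,2^{d+\alpha-3}\,B\Bigl(\tfrac{d+\alpha-1}{2},\tfrac{d-1}{2}\Bigr).
\]

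Dividing by $\omega_{d-1}/\omega_d>0$, the inequality $\phi(1)>\phi'(0)$ becomes
\[
2^{d+\alpha-3}\,B\Bigl(\tfrac{d+\alpha-1}{2},\tfrac{d-1}{2}\Bigr)>\frac{d+\alpha-2}{d}\,B\Bigl(\tfrac{d-1}{2},\tfrac12\Bigr),
\]
and, after expressing the Beta functions through $\Gamma$, applying Legendre's duplication formula to $\Gamma(d+\alpha-2)$, and using $\Gamma(z+1)=z\Gamma(z)$, this is in turn equivalent to
\[
\Gamma\Bigl(\tfrac d2+1\Bigr)\,\Gamma(d+\alpha-2)>\Gamma\Bigl(\tfrac{d+\alpha}{2}\Bigr)\,\Gamma\Bigl(d-1+\tfrac\alpha2\Bigr);
\]
I call this inequality $(\ast)$. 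Note that at $\alpha=4-d$ both sides equal $\Gamma(\tfrac d2+1)$, so $(\ast)$ degenerates to an equality there, consistent with Lemma \ref{lem2.2}(ii) (where $\phi/r$ is constant on $[0,1]$ when $d+\alpha=4$).

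To prove $(\ast)$ I would set $G(\alpha):=\Gamma(\tfrac d2+1)\,\Gamma(d+\alpha-2)\big/\bigl(\Gamma(\tfrac{d+\alpha}{2})\,\Gamma(d-1+\tfrac\alpha2)\bigr)$, so $G(4-d)=1$, and show $G$ is strictly decreasing on $(2-d,4-d)$. First, $\log G$ is strictly convex there: its second derivative is $\psi_1(d+\alpha-2)-\tfrac14\psi_1(\tfrac{d+\alpha}{2})-\tfrac14\psi_1(d-1+\tfrac\alpha2)$, where $\psi_1$ denotes the strictly decreasing trigamma function; since $d+\alpha-2<\tfrac{d+\alpha}{2}\le d-1+\tfrac\alpha2$ (using $d+\alpha<4$ and $d\ge2$), both of the last two trigamma values are strictly less than $\psi_1(d+\alpha-2)$, so the expression exceeds $\tfrac12\psi_1(d+\alpha-2)>0$. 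Hence $(\log G)'$ is strictly increasing on $(2-d,4-d)$, so it is enough to check $(\log G)'(4-d)\le0$; at $\alpha=4-d$ this derivative equals $\tfrac12\bigl(\psi_0(2)-\psi_0(\tfrac d2+1)\bigr)$, which is $\le0$ because the digamma function $\psi_0$ is increasing and $\tfrac d2+1\ge2$. Therefore $(\log G)'<0$ on $(2-d,4-d)$, so $G(\alpha)>G(4-d)=1$ there; this is $(\ast)$, and \eqref{eq14.39} follows.

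The step I expect to be the main obstacle is $(\ast)$ itself: it is sharp — equality at $d+\alpha=4$, and it reverses for $d+\alpha>4$ (consistent with $\phi/r$ being strictly decreasing there, cf. Lemma \ref{lem2.2} and Section \ref{sec4}) — so any argument must be sensitive to the precise threshold $d+\alpha=4$, and the log-convexity route above achieves this using only monotonicity of the di- and trigamma functions. The two preliminary evaluations of $\phi'(0)$ and $\phi(1)$ are routine once one commits to the half-angle substitution and the Beta-function bookkeeping. (Alternatively one may avoid computing $\phi(1)$ altogether: the series expansion derived in Section \ref{sec4} remains valid — by analytic continuation in $\gamma=4-d-\alpha$ — for $\gamma\in(0,2)$ as well, and all of its coefficients $a_k$ are then negative, so $r\phi'(r)-\phi(r)>0$ on $(0,1)$; hence $\phi/r$ is strictly increasing on $(0,1)$ and, since $\phi(0)=0$, $\phi(1)>\lim_{r\to0^+}\phi(r)/r=\phi'(0)$.)
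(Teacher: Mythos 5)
Your proposal is correct, and its main line coincides with the paper's: you evaluate $\phi'(0)$ and $\phi(1)$ in closed form via Beta functions (the paper gets, in its notation $\gamma=4-d-\alpha$, exactly your expressions $\frac{\omega_{d-1}}{\omega_d}\frac{2-\gamma}{d}B(\frac{d-1}{2},\frac12)$ and $\frac{\omega_{d-1}}{\omega_d}2^{1-\gamma}B(\frac{3-\gamma}{2},\frac{d-1}{2})$), apply the duplication formula, and reduce the claim to precisely the paper's inequality \eqref{eq15.46}, which is your $(\ast)$ rewritten in terms of $\gamma$. The only genuine divergence is how that last inequality is settled: the paper observes that the two argument pairs have equal sums, $2-\gamma+\frac{d+2}{2}=\frac{d-\gamma+2}{2}+\frac{4-\gamma}{2}$, with the left pair having the strictly smaller product, so \eqref{eq15.46} is a one-line consequence of the (strict) convexity of $\log\Gamma$ (a Karamata/majorization argument); you instead prove the same inequality by showing the ratio $G(\alpha)$ is strictly decreasing up to the equality case $\alpha=4-d$, using strict convexity of $\log G$ via the trigamma function and a sign check of $(\log G)'$ at the endpoint. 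Your route is longer but equally valid, and it has the virtue of making the sharpness at $d+\alpha=4$ explicit; the paper's finish is slicker. Your separate treatment of $d=1$ is a sensible precaution (the polar formula \eqref{eq16.59} is degenerate there, and the paper glosses over this since $\Gamma(\frac{d-1}{2})$ cancels before the final inequality), and your parenthetical alternative via the series expansions \eqref{eq14.29}--\eqref{eq14.30} continued to $\gamma\in(0,2)$ is exactly the remark with which the paper opens its own proof.
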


\begin{proof}[Proof of Theorem \ref{thm3.1}]
We shall construct a certain type of delta-ring solution where the radii of
rings of the initial data are powers of a
given large number. Our goal is to determine how to distribute the unit mass on these rings, so that the solution will be a similarity solution of the first kind. From the proof below, it should be apparent that for the existence of such a distribution, all we need is that ratios of different radii are sufficiently large (or small). Of course, there could be many other multi delta-ring solutions which are not of this structure.

Let $\lambda$ be a sufficiently large constant to be specified later. We take
$$
\rho_k=\lambda^{k-1},\quad m_k=\rho_k^{2-\alpha}\tilde  m_k,\quad k=1,\ldots,n.
$$
Again, we denote $w(\rho)=|v(\rho e_1)|$. Thanks to \eqref{eq16.08}, in this case a necessary and sufficient condition for $\mu_t$ to be a similarity solution is that
\begin{equation}
                            \label{eq15.12}
w(\rho_k)/\rho_k=w(\rho_j)/\rho_j,\quad k,j=1,\ldots,n.
\end{equation}
So to prove the theorem it suffices to show that, for sufficiently large $\lambda$, there exists $\tilde m_k,k=1,\ldots,n$ such that the condition \eqref{eq15.12} is satisfied.

From \eqref{eq16.39}, we have
$$
w(\rho_k)=\sum_{j=1}^n \phi(\rho_k/\rho_j)\rho_j^{\alpha-1}m_j
=\sum_{j=1}^n \phi(\lambda^{k-j})\rho_j \tilde m_j.
$$
Therefore,
\begin{equation*}
                    %\label{eq15.01}
w(\rho_k)/\rho_k=\sum_{j=1}^n \phi(\lambda^{k-j})\lambda^{j-k} \tilde m_j,
\end{equation*}
and \eqref{eq15.12} is equivalent to the existence of a solution $(\mathrm{x}_1,\ldots,\mathrm{x}_n)$ to the linear algebraic equation
\begin{equation}
                            \label{eq16.19}
B_\lambda (\mathrm{x}_1,\ldots,\mathrm{x}_n)^T=(1,\ldots,1)^T
\end{equation}
satisfying $\mathrm{x}_j>0,j=1,\ldots,n$,
where $B_\lambda=[B_{\lambda,kj}]_{j,k=1}^n$ is an $n\times n$ matrix and
$$
B_{\lambda,kj}=\phi(\lambda^{k-j})\lambda^{j-k}\quad j,k=1,\ldots,n.
$$
It follows from \eqref{eq11.30} that
$$
\lim_{\lambda\to \infty}B_{\lambda,kj}=B_{\infty,kj}:=\left\{\begin{aligned}
&\phi'(0) \quad &\text{if}\quad k<j,\\
&\phi(1) \quad &\text{if} \quad k=j,\\
&0 \quad &\text{if}\quad k>j,
\end{aligned}\right.
$$
which together with Lemma \ref{lem2} implies that $B_{\infty}$ is upper-triangular and non-degenerate. It is a simple fact that the equation
$$
B_{\infty}(\mathrm{x}_1,\ldots,\mathrm{x}_n)^T=(1,\ldots,1)^T
$$
has a unique solution which is given by
$$
(\mathrm{x}_1,\ldots,\mathrm{x}_n)^T=\big(\phi(1)\big)^{-1}((1-c)^{n-1},(1-c)^{n-2},\ldots,1)^T,
$$
where $c=\phi'(0)/\phi(1)$. By Lemma \ref{lem2}, we have $1-c>0$. Since $B_\infty$ is non-degenerate, by a standard continuity argument, for $\lambda>0$ sufficiently large, \eqref{eq16.19} has a unique solution, which satisfies $\mathrm{x}_j>0,j=1,\ldots,n$. The theorem is thus proved.
\end{proof}

We finish this section by giving the proof of Lemma \ref{lem2}.
\begin{proof}[Proof of Lemma \ref{lem2}]
First we remark that \eqref{eq14.39} essentially follows from the calculations in Section \ref{sec4}. Indeed, by the expansions \eqref{eq14.29} and \eqref{eq14.30}, one can easily see that $\psi>0$ in $(0,1)$, which implies that $\phi(r)/r$ is strictly increasing in $(0,1]$.
Below we give an alternative proof without using the complicated series expansions.

As before, denote $\gamma=4-d-\alpha\in (0,2)$. By \eqref{eq17.02}, we have
\begin{align}
\phi'(0)&=\frac {\omega_{d-1}}{\omega_d}\int_0^\pi (\sin\theta)^{d-2}\big(1 -(2-\alpha)(\cos\theta)^2\big)\,\md \theta\nonumber\\
&=\frac {\omega_{d-1}}{\omega_d}\Big[B\Big(\frac {d-1} 2,\frac 1 2\Big)
-(2-\alpha)B\Big(\frac {d-1} 2,\frac 3 2\Big)\Big]\nonumber\\
&=\frac {\omega_{d-1}}{\omega_d}\frac {2-\gamma} d B\Big(\frac {d-1} 2,\frac 1 2\Big)\nonumber\\
                                            \label{eq15.23}
&=\frac {\omega_{d-1}}{\omega_d}\frac {2-\gamma} d \Gamma\Big(\frac {d-1} 2\Big)
\Gamma\Big(\frac 1 2\Big)\Big(\Gamma\Big(\frac d 2\Big)\Big)^{-1}>0,
\end{align}
where $B(\cdot,\cdot)$ is the Beta function.
Here, we used the formula
$$
\int_0^{\pi/2}(\cos\theta)^\beta (\sin\theta)^\gamma\,\md \theta=\frac 1 2B\Big(\frac {\beta+1} 2,\frac{\gamma+1} 2\Big)\quad \forall\,\,\beta,\gamma>-1.
$$
By \eqref{eq16.59}, we compute
\begin{align}
\phi(1)&=\frac {\omega_{d-1}}{\omega_d}\int_0^\pi \frac {(1-\cos\theta)(\sin\theta)^{d-2}}{(2-2\cos\theta)^{1-\alpha/2}}\,\md \theta\nonumber\\
&=\frac {\omega_{d-1}}{\omega_d}\int_0^\pi \frac 1 2 \big(2\sin(\theta/2)\cos(\theta/2)\big)^{d-2}
\big(2\sin(\theta/2)\big)^{\alpha}\,\md \theta\nonumber\\
&=\frac {\omega_{d-1}}{\omega_d}2^{1-\gamma}\int_0^\pi \big(\sin(\theta/2)\big)^{2-\gamma}
\big(\cos(\theta/2)\big)^{d-2}\,\md \theta\nonumber\\
&=\frac {\omega_{d-1}}{\omega_d}2^{1-\gamma}B\Big(\frac {3-\gamma} 2,\frac{d-1} 2\Big)\nonumber\\
                                        \label{eq15.13}
&=\frac {\omega_{d-1}}{\omega_d}2^{1-\gamma}\Gamma\Big(\frac {3-\gamma} 2\Big)
\Gamma\Big(\frac{d-1} 2\Big)\Big(\Gamma\Big(\frac {d-\gamma+2} 2\Big)\Big)^{-1}.
\end{align}
It follows from the duplication formula for the Gamma function that
$$
\Gamma\Big(\frac {3-\gamma} 2\Big)=2^{\gamma-1}
\Gamma\Big(\frac 1 2\Big)\Gamma\Big(2-\gamma\Big)\Big(\Gamma\Big(\frac {2-\gamma} 2\Big)\Big)^{-1}.
$$
This together with \eqref{eq15.13} gives
\begin{equation}
                                            \label{eq15.36}
\phi(1)=\frac {\omega_{d-1}}{\omega_d}\Gamma\Big(\frac 1 2\Big)\Gamma\Big(2-\gamma\Big)
\Gamma\Big(\frac{d-1} 2\Big)\Big(\Gamma\Big(\frac {d-\gamma+2} 2\Big)\Gamma\Big(\frac {2-\gamma} 2\Big)\Big)^{-1}.
\end{equation}
By using \eqref{eq15.23} and \eqref{eq15.36}, to prove \eqref{eq14.39} it suffices to show
$$
\Gamma\Big(2-\gamma\Big)
\Big(\Gamma\Big(\frac {d-\gamma+2} 2\Big)\Gamma\Big(\frac {2-\gamma} 2\Big)\Big)^{-1}
>\frac {2-\gamma} d
\Big(\Gamma\Big(\frac d 2\Big)\Big)^{-1},
$$
which is equivalent to
\begin{equation}
                                    \label{eq15.46}
\Gamma\Big(2-\gamma\Big)\Gamma\Big(\frac {d+2} 2\Big)>
\Gamma\Big(\frac {d-\gamma+2} 2\Big)\Gamma\Big(\frac {4-\gamma} 2\Big).
\end{equation}
Since
$$
2-\gamma+\frac {d+2} 2=\frac {d-\gamma+2} 2+\frac {4-\gamma} 2
$$
and
$$
(2-\gamma)\cdot\frac {d+2} 2<\frac {d-\gamma+2} 2\cdot\frac {4-\gamma} 2,
$$
the inequality \eqref{eq15.46} is an immediate consequence of the log-convexity property of the Gamma function.
\end{proof}

\mysection{An example}
                                \label{sec5}
Finally, we present an example of first-kind similarity solutions with support on open sets in the limiting case $\alpha=2-d$, which generalizes a 1D example constructed in \cite{BCL} with $\alpha=1$.

\begin{example}
                                \label{ex5.1}
Assume $d\ge 2$ and $\alpha=2-d$. Let $u_0(x)=d\omega_d^{-1}  I_{|x|<1}$. Then $u$ given by \eqref{eq00.25} with $R(t)=(1-t\omega_d)^{1/d}$ is a measure similarity solution to the aggregation equation \eqref{Agg}. Moreover, $T_0=\omega_d^{-1}$ is the blowup time of the solution.
\end{example}

%The constant $T_0$ in Example \ref{ex5.1} can be made explicitly, but we do not intend to do so.
As mentioned before, to prove that $u$ is a similarity solution, it suffices to show that $$
w(r):=\int_0^\infty \phi(r/\rho)\rho^{\alpha-1}\,\md \hat \mu_0(\rho)
$$
is a linear function with respect to $r$ on $[0,1)$, which is the support of $\hat \mu_0$. For $\alpha=2-d$, from \eqref{eq16.59} we have
$$
\phi(r)=\frac {\omega_{d-1}}{\omega_d}\int_0^\pi \frac {(r-\cos\theta)(\sin\theta)^{d-2}}{A^{d}(r,\theta)}\,\md \theta,
$$
which can be evaluated explicitly:
$$
\phi(r)= \left\{\begin{aligned}
&0 \quad &\text{on}\quad [0,1),\\
&1/2 \quad &\text{at}\quad r=1,\\
&r^{1-d} \quad &\text{on} \quad (1,\infty).
\end{aligned}\right.
$$
Consequently, for $r\in [0,1)$, we have
\begin{align*}
w(r)&:=\int_0^1 \phi(r/\rho)\rho^{\alpha-1}\,\md \hat\mu_0(\rho)\\
&=\int_0^r \phi(r/\rho)\rho^{\alpha-1}\,\md \hat\mu_0(\rho)\\
&=r^{1-d}\mu_0(B_r)=\frac{\omega_d} d r.
\end{align*}
Therefore, indeed $w$ is a linear function on $[0,1)$ and $u$ is a similarity solution to \eqref{Agg}. Furthermore, we have $\lambda=\omega_d/d$. Following the argument at the beginning of Section \ref{sec2}, we obtain
$$
T_0=(2-\alpha)^{-1}\lambda^{-1}=\omega_d^{-1},
\quad R(t)=(1-t\omega_d)^{1/d}.
$$

\section*{Acknowledgement}
The author would like to thank Thomas Laurent for his helpful discussions and comments. He also wishes to thank the referees for valuable comments and suggestions.


\begin{thebibliography}{m}
\bibitem{BCL} A. Bertozzi, J. Carrillo, T. Laurent, Blowup in multidimensional aggregation equations with mildly singular interaction kernels, \textit{Nonlinearity} \textbf{22} (2009), no. 3, 683--710.

\bibitem{BL} A. Bertozzi, T. Laurent, Finite-time blow-up of solutions of an aggregation equation in $\bR^n$, \textit{Comm. Math. Phys.} \textbf{274} (2007), no. 3, 717--735.

\bibitem{BL2} A. Bertozzi, T. Laurent, The behavior of solutions of multidimensional aggregation equations with mildly singular interaction kernels, \textit{Chin. Ann. Math. Ser. B} \textbf{30} (2009), no. 5, 463--482.


\bibitem{BLR} A. Bertozzi, T. Laurent, J. Rosado, $L^p$ theory for the multidimensional aggregation equation, \textit{Comm. Pure Appl. Math.}, \textbf{64} (2010), no. 1, 45--83.

\bibitem{BGL} A. Bertozzi, J. Garnett, T. Laurent, Characterization of radially symmetric finite time blowup in multidimensional aggregation equations, submitted (2011)

\bibitem{BV} M. Bodnar, J. Vel\'azquez, An integro-differential equation arising as a limit of individual cell-based models, \textit{J. Differential Equations} \textbf{222} (2006), no. 3, 341--380.

\bibitem{CFFLS} J. Carrillo, M. Di Francesco, A. Figalli, T. Laurent, D. Slep\v{c}ev, Global-in-time weak measure solutions, finite-time aggregation and confinement for nonlocal interaction equations, \textit{Duke Math. J.} \textbf{156} (2011),  no. 2, 229--271.

\bibitem{CMV} J. Carrillo, R. McCann, C. Villani, Contractions in the 2-Wasserstein length space and thermalization of granular media,  \textit{Arch. Ration. Mech. Anal.}  \textbf{179}  (2006),  no. 2, 217--263

\bibitem{Don10} H. Dong, The aggregation equation with power-law kernels: ill-posedness, mass concentration and similarity solutions, \textit{Comm. Math. Phys.}, \textbf{304} (2011) no. 3, 649--664.

\bibitem{Hua10} Y. Huang, A. Bertozzi, Asymptotics of blowup solutions for the aggregation equation, submitted (2011).


\bibitem{HB} Y. Huang, A. Bertozzi, Self-similar blowup solutions to an aggregation equation in $\bR^N$, \textit{SIAM. J. Appl. Math.} \textbf{70}  (2010), no. 7, 2582--2603.

\bibitem{La} T. Laurent, Local and Global Existence for an Aggregation Equation, \textit{Comm. Partial Differential Equations} \textbf{32} (2007), no. 10-12, 1941--1964.

\bibitem{LR1} D. Li, J. Rodrigo, Finite-time singularities of an aggregation equation in $\bR^n$ with fractional dissipation, \textit{Comm. Math. Phys.} \textbf{287} (2009), no. 2, 687--703.

\bibitem{LR2} D. Li, J. Rodrigo, Refined blowup criteria and nonsymmetric blowup of an aggregation equation, \textit{Adv. Math.} \textbf{220} (2009), no. 1, 1717--1738.

\bibitem{LT} H. Li, G. Toscani, Long-time asymptotics of kinetic models of granular flows, \textit{Arch. Ration. Mech. Anal.} \textbf{172} (2004), no. 3, 407--428.

\bibitem{ME} A. Mogilner, L. Edelstein-Keshet, A non-local model for a swarm, \textit{J. Math. Bio.} \textbf{38} (1999), no. 6, 534--570.


\end{thebibliography}
\end{document}